\documentclass[10pt]{article}
\usepackage{graphicx,amsmath,amsfonts,amssymb}
\usepackage{mathtools}
\usepackage{amsthm}
\usepackage[version=4]{mhchem}

\usepackage[paper=letterpaper,margin=2cm]{geometry}

\DeclareMathOperator{\rank}{rank}

\usepackage{tikz-cd}

\usepackage{xcolor}

\newcounter{comments}
\setcounter{comments}{0}
\usepackage{comment}

\definecolor{jackcolor}{RGB}{0, 128, 32}

\theoremstyle{definition}
\newtheorem{thm}{Theorem}
\newtheorem{defn}[thm]{Definition}
\newtheorem{ex}[thm]{Example}

\providecommand{\keywords}[1]
{
  \small	
  \textbf{\textit{Keywords---}} #1
}

\title{Observability of complex systems via conserved quantities}
\author{Bhargav Karamched$^{a,b,c}$, Jack Schmidt$^d$, David Murrugarra$^d$}

\begin{document}

\maketitle

{\footnotesize
     \centerline{$^a$Department of Mathematics,
      Florida State University,}
  \centerline{Tallahassee, FL 32306-4510, USA}
}

{\footnotesize
     \centerline{$^b$Institute of Molecular Biophysics,
      Florida State University,}
  \centerline{Tallahassee, FL 32306-4510, USA}
}

{\footnotesize
     \centerline{$^c$Program in Neuroscience,
      Florida State University,}
  \centerline{Tallahassee, FL 32306-4510, USA}
}

{\footnotesize
     \centerline{$^d$Department of Mathematics,
      University of Kentucky,}
  \centerline{Lexington, KY 40506-0027, USA}
}

\begin{abstract}\noindent
Many systems in biology, physics, and engineering are modeled by nonlinear dynamical systems
where the states are usually unknown and only a subset of the state variables can be physically measured. Can we understand the full system from what we measure? In the mathematics literature, this question is framed as the observability problem. It has to do with recovering information about the state variables from the observed states (the measurements). In this paper, we relate the observability problem to another structural feature of many models relevant in the physical and biological sciences: the conserved quantity. For models based on systems of differential equations,
conserved quantities offer desirable properties such as dimension reduction which simplifies model analysis.
Here, we use differential embeddings to show that conserved quantities involving a set of special variables provide more flexibility in what can be measured to address the observability problem for systems of interest in biology.
Specifically, we provide conditions under which a collection of conserved quantities make the system observable.
We apply our methods to provide alternate measurable variables in models where conserved quantities have been used for model analysis historically in biological contexts.
\end{abstract}

\keywords{Observability, conserved quantity, nonlinear dynamical systems, differential embeddings, graphical approach.}

\medskip

\section{Introduction}
A fundamental question in nonlinear dynamics is whether the entire state of a system can be inferred from measurements of a subset of outputs of the states that comprise the system. In the mathematics literature, this is referred to as the observability problem~\cite{aeyels1981generic,sontag2013mathematical}. Briefly, a dynamical system is called observable if one can obtain complete information about the internal state of the dynamical system from measurements of a subset of the outputs.

{In this paper, we consider dynamical systems of the form
\begin{equation}\label{eq:dyn_sys}
\frac{d\textbf{x}(t)}{dt} = f(\textbf{x}(t), \Lambda)
\end{equation}
with observable variables $\textbf{y} = g(\hat{\textbf{x}})$, where 
$f:\mathbb{R}^n \times \mathbb{R}^l\to\mathbb{R}^n$ and $g:\mathbb{R}^m\to\mathbb{R}^m$ are differentiable functions. Here, $\Lambda \in \mathbb{R}^l$ denotes inputs to the system~\eqref{eq:dyn_sys}, and it appears implicitly in the dynamics. We assume that we can only measure a subset of the state variables represented by $\hat{\textbf{x}}\in\mathbb{R}^m$ and the initial state $\textbf{x}_0\in\Omega_0\subset\mathbb{R}^n$ is unknown. For the rest of the paper, we suppress notation and write $ f(\textbf{x}(t)) = f(\textbf{x}(t),\Lambda$).}

{\begin{defn}
The system in Eq.~\eqref{eq:dyn_sys} is \textbf{observable} in $\Omega_0$ if there is a bijection between the initial states in $\Omega_0$ and the set of trajectories of the observed outputs $\textbf{y}(t)$ for $t\geq0$ \cite{kou1973observability}. We say that the system is \textbf{locally observable} at $x_0$ if there exists an open neighborhood $\Omega_0$ of $x_0$ for which the system is observable in $\Omega_0$.
\end{defn}}

Ascertaining which state variables to measure to completely understand a system is of central importance in physical and biological applications. Nonlinear dynamical systems have been used to model chemical reaction networks \cite{shinar2010structural,gunawardena2003chemical,feinberg2019foundations}, combustion reaction networks \cite{turns2011introduction,klippenstein2017theoretical,anderson2022analysis}, power grids \cite{witthaut2022collective,lopez2017assessment,osipov2018adaptive}, biophysical networks \cite{karamched2023stochastic,fazli2022network,hogan2021flipping,kim2012mechanism,del2015biomolecular}, epidemics \cite{weiss2013sir,kuznetsov1994bifurcation,ngonghala2020could}, and cancer \cite{hoffmann2020differential,plaugher2024cancer,komarova2023mutant}.  Observability of such dynamical systems is vital to constructively inform experimentalists and engineers what should be measured to optimize inference of the progress of their work. Most often, all variables involved are unable to be measured. Determining which outputs of a system should be measured to understand the full system is thus useful and essential for scientific and technological progress across disciplines.

For example, in determining the kinetic properties of an enzymatic reaction, one of the biochemical species must be measured to understand reaction rate. It is a challenging endeavor to simultaneously measure all constituents of the enzymatic reaction. Observable dynamical systems can inform biochemists of which species to track to understand the full system. Similarly, in an infection outbreak, observable dynamical systems can inform epidemiologists of what populations to track to optimally understand the dynamics of an epidemic. It is thus vital to develop mathematical theory and methods to ascertain whether a dynamical system is observable, and, if so, to determine which observables render the dynamical system observable.

\begin{figure}
    \centering
    \includegraphics[width=\textwidth]{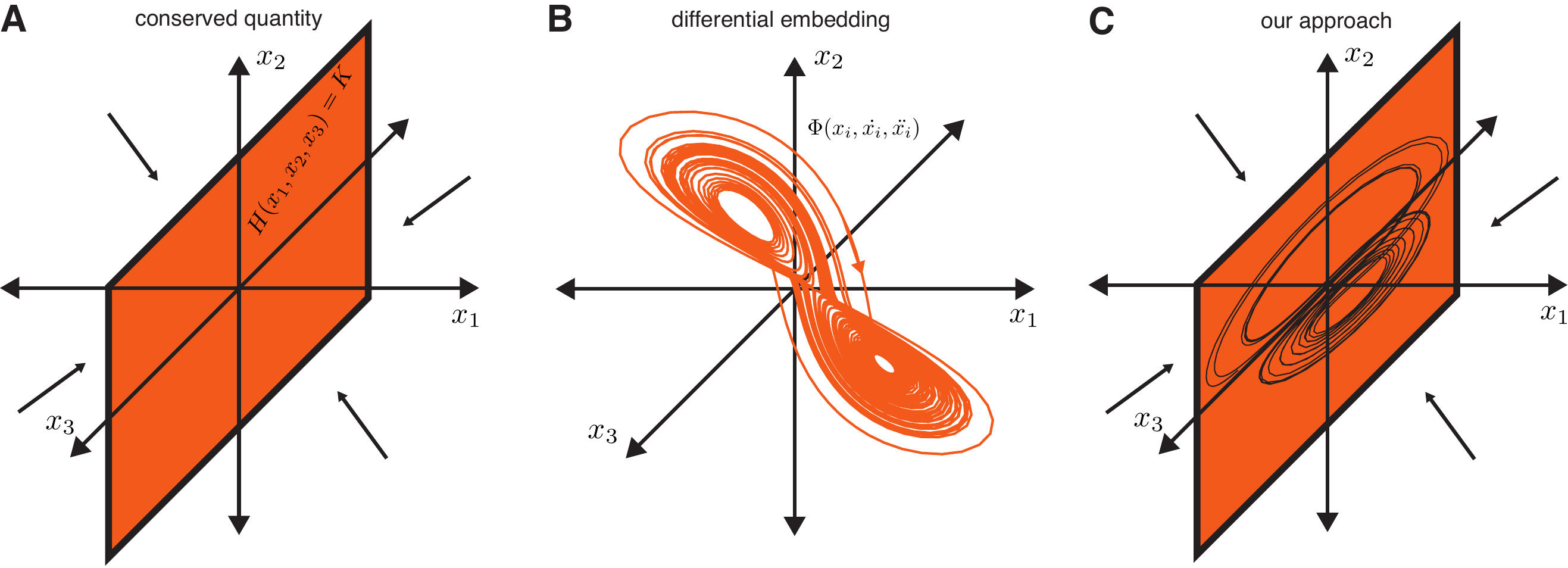}
    \caption{Schematic of the components and result of our work. (A) A conserved quantity {projects} dynamics of a dynamical system to a lower-dimensional submanifold. (B) A differential embedding is a transformation of the phase space of the original system. (C) By {projecting} the differential embedding onto the submanifold given by the conserved quantity, scalar observables that are not predicted to render the full system observable do make the system observable.}
    \label{fig:schematic}
\end{figure}

Several methods exist to determine whether a system is observable.  A longstanding method is to look at the Lie derivatives of the observables with respect to the governing nonlinear vector field and construct the Jacobian matrix of the Lie derivatives \cite{sontag2013mathematical,yano2020theory}. Parameter regimes where the Jacobian has full rank are those where the chosen observable renders the full system observable. {A related approach presented in \cite{letellier2005relation} considers a $k$-dimensional differential embedding $\Phi:\mathbb{R}^m\to \mathbb{R}^{km}$ given by $\Phi(\hat{\textbf{x}}) = (g(\hat{\textbf{x}}),\dot{g}(\hat{\textbf{x}}),\dots,g^{(k)}(\hat{\textbf{x}}))$ (derivatives with respect to time). The map $\Phi$ is locally invertible at $\textbf{x}_0$ if the Jacobian has full rank. That is, the map $\Phi$ is locally invertible at $\textbf{x}_0$ if
\begin{equation}\label{eq:rank}
\rank\left(\frac{\partial\Phi}{\partial\textbf{x}}\Big|_{\textbf{x}_0}\right) = n.
\end{equation}
The system in Eq.~\eqref{eq:dyn_sys} is locally observable at $x_0$ if and only if Eq.~\eqref{eq:rank} holds \cite{letellier2005relation}.}
 
{Another approach is the graphical approach. The graphical approach \cite{liu2013observability} associates a directed graph $\mathcal{G}$ to the system given by Eq.~\eqref{eq:dyn_sys}, where the nodes of $\mathcal{G}$ are $x_1$, $\dots$, $x_n$ and there is an edge from $x_i$ to $x_j$ if $x_j$ appears in the differential equation of $x_i$. The directed graph is partitioned into strongly connected components. A necessary and often sufficient condition for rendering Eq.~\eqref{eq:dyn_sys} observable is to observe one node in each strongly connected component that has no incoming edge.  
The graphical approach in \cite{liu2013observability} states that a necessary and sufficient condition for observability of the system in Eq.~\eqref{eq:dyn_sys} is to observe the source nodes in $\mathcal{G}$ and a variable in each strongly connected component of $\mathcal{G}$.} Finally there are methods based on a strongly positive definite condition and sensor selection based on optimization to ascertain observability\cite{kou1973observability,haber2017state}.

In this paper, we expand on the aforementioned results by considering the effect of another property of dynamical systems that manifests in several biological and physical applications:  the conserved quantity. A conserved quantity of a dynamical system is a function of the state variables that remains invariant in time:
{\begin{defn}
For $m\leq n$, a scalar-valued function $H:\mathbb{R}^m\to\mathbb{R}$ is a conserved quantity of Eq.~(\ref{eq:dyn_sys}) if, for all time and initial conditions,
\begin{equation}\label{eq:cq}
\frac{dH}{dt} = 0.
\end{equation}
\end{defn}
Note that if $m=n$, then the condition in Eq.~\ref{eq:cq} can also be stated as
\[
\nabla H\cdot \frac{d\textbf{x}(t)}{dt} = \nabla H\cdot f(\textbf{x}(t)) = 0,
\]
where $\nabla = (\frac{\partial}{\partial x_1},\dots,\frac{\partial}{\partial x_n})$. We can represent $\ell$ conserved quantities $H_1,\dots,H_\ell$ by using a function $G:\mathbb{R}^n\to\mathbb{R}^\ell$ where $G = (H_1,\dots,H_\ell)$. {Here, $G =$ constant.}}

{A typical use} of a conserved quantity is dimension reduction of the system under scrutiny. Because it defines a dependency between the state variables of system, the dynamics of the full system are {projected} to a submanifold of the phase space, thereby potentially simplifying analysis.

We show that conserved quantities in combination with differential embeddings provide a means to identify alternative observables in a system that render a system observable (see Fig.~\ref{fig:schematic}). {More specifically, we show that if a conserved quantity exists for a particular dynamical system, then observing the source node(s) of the related directed graph is \textit{no longer necessary.} The directed graph can be transformed via the conserved quantity, yielding a new set of variables that render the system observable. We demonstrate this in the following motivating example.}

\begin{ex}
  {SIR models are popular for describing dynamics of an infectious disease and for unveiling key biophysical parameters that govern the transition of a disease from dissipating in a population to persisting in an endemic state \cite{weiss2013sir,kuznetsov1994bifurcation,ngonghala2020could}. Such models are typically composed of three state variables: $S$ representing the number of susceptible individuals in a population, $I$ representing the number of infected individuals, and $R$ representing the number of recovered or removed individuals.  They have been shown to apply to more general settings as well by incorporating spatial and stochastic dynamics in their structure \cite{ji2014threshold,tornatore2005stability,milner2008sir, marques2022sir}. Furthermore, they have been used to study dissemination of information through a social network in a number of studies \cite{zhou2015influence,morsky2023impact}. Hence, SIR models form a crux of much of mathematical epidemiology literature.}

{One of the simplest SIR models describes the dynamics of an epidemic on a short timescale.  In such instances, the impact infection imparts on population dynamics vastly outweighs birth and death events, so birth and death terms do not manifest in the SIR dynamics. Because of this, the total number of individuals is invariant in time. Such a model is applicable, for example, in describing the dynamics and spread of the flu virus through a population \cite{hethcote2000mathematics}.}

{Here we investigate such a model.  Consider the following SIR model:
\begin{align}
\frac{dS}{dt} &= -\beta SI \nonumber\\
\frac{dI}{dt} &= \beta SI - \lambda I \label{eq:sir}\\
\frac{dR}{dt} &= \lambda I \nonumber,
\end{align}
where $S$ represents the susceptible population, $I$ represents the infected population, and $R$ represents the recovered population. The parameter $\beta$ quantifies the infectivity of the infectious disease under consideration; thus, the $\beta SI$ term captures the rate at which susceptible individuals become infected through contact with infected individuals. The parameter $\lambda$ quantifies the rate of recovery of an infected individual.  This system contains a conserved quantity, namely the total population. That is, $\forall t$, $S + I + R = N$ for a prescribed $N \in \mathbb{R}$. The total population is implicit in the right-hand side of Eq.~\eqref{eq:sir}, but is the input to the system. Because of this conserved quantity, Eq.~\eqref{eq:sir} can be reduced to a two-dimensional system that has the same equilibria and stability as the full system. Such a reduction greatly facilitates analysis.}

\begin{figure}[b!]
\includegraphics[width = \textwidth]{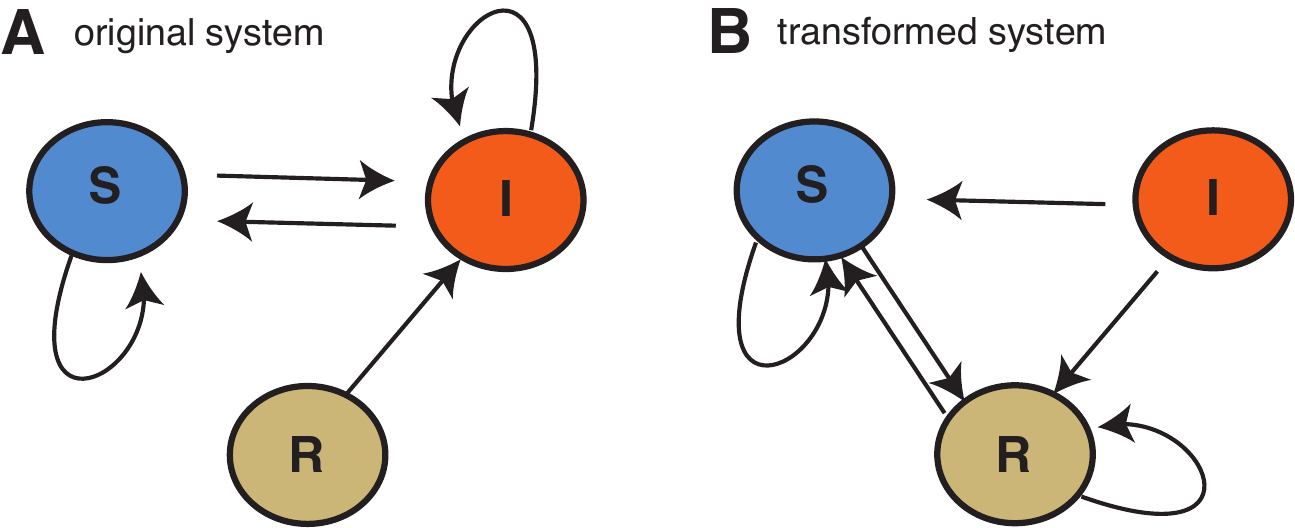}
\caption{Reaction graphs corresponding to the SIR system. (A) The original model. (B) The transformed model from invoking the transformation $I = N - S - R$.}
\label{fig:sir}
\end{figure}

{We depict the associated graph of this model in Fig.~\ref{fig:sir}A. According to the graphical approach in \cite{liu2013observability}, it is necessary to measure $R$ to make the system observable and that just measuring $I$ would not make the system observable. However, that conclusion would be wrong as we can get the information for $S$ and $R$ by measuring $I$ and using the conserved quantity $N$. Indeed, $S =  \tfrac{1}{\beta}\left( \dfrac{\dot I}{I} + \lambda\right)$ and $R=N-I-S = N - I -  \tfrac{1}{\beta}\left( \dfrac{\dot I}{I} + \lambda\right)$ expresses both $R$ and $S$ as functions of $I$, $\dot I = \frac{dI}{dt}$, and the conserved quantity $N$.}

{One could apply the graphical method to
\begin{align}
\frac{dS}{dt} &= -\beta SI \nonumber\\
\frac{dI}{dt} &= \beta SI - \lambda I \label{eq:sir2}\\
\frac{dN}{dt} &= 0\nonumber
\end{align}
to obtain that observing either $I$ and $N$ (or $S$ and $N$) suffices to recover all variables, including $R=N-I-S$, but the measurement of the unchanging quantity $N$ is practically quite different from measuring the varying quantity $I$.\\}
\end{ex}

We emphasize that existing methods for determining observability do not consider conserved quantities or how they impact the observability of a system. Existing theory pertaining to conserved quantities relates them to linear first integrals~\cite{szederkenyi2018analysis} or to dynamic conservation balances ~\cite{cameron2001process}. Therefore, the available methods are unable to detect alternative variables that render a system observable. For example, we show in this paper with an example that the graphical approach can miss alternate observables imputed by conserved quantities. {We do not claim that the graphical approach presented in ~\cite{liu2013observability} is incorrect. Our claim is that with knowledge of a conserved quantity, the necessary condition in the result therein can be relaxed. Indeed, our work here shows that knowledge of a conserved quantity provides a workaround for classical observability restrictions.}

Our approach is of interest to experimentalists and engineers because it provides a means to identify system outputs to measure that could reveal the internal state of the process being studied. Current methods for identifying observables {may fail to solve the observability problem because they may suggest observables that cannot be experimentally measured}. Our method provides flexibility in such scenarios.

Mathematically, our contribution is to append to the rich literature on observable and controllable systems.  We claim that if system dynamics can be {projected} to a submanifold that is inherent to the system, {alternative, potentially more practical, observables to what the original system suggested that render the full system observable can be ascertained.} Furthermore, our main result describes conditions for which submanifold to {project} dynamics if more than one are inherent to the system.

\section{Main Result}\label{sec:results}

For $m\leq n$, a subset of variables $\textbf{s}\in\mathbb{R}^m$ are called sufficient whenever observing these variables makes the system observable.
Next, we consider a partition of the variables in Eq.~\eqref{eq:dyn_sys}, $\textbf{x} = (\textbf{r},\textbf{s})$ where $\textbf{r}\in\mathbb{R}^{n-m}$ and $\textbf{s}\in\mathbb{R}^m$ is the set of sufficient variables.

Given a collection of conserved quantities $G:\mathbb{R}^n\to\mathbb{R}^\ell$, we describe its Jacobian using the partition above as follows:

\begin{equation}
\frac{\partial G}{\partial\textbf{x}}(\textbf{r},\textbf{s}) = \left[\begin{array}{ccc|ccc}
 \frac{\partial G_1}{\partial r_1}(\mathbf{r},\mathbf{s}) & \cdots & \frac{\partial G_1}{\partial r_{n-m}}(\mathbf{r},\mathbf{s}) &
 \frac{\partial G_1}{\partial s_1}(\mathbf{r},\mathbf{s}) & \cdots & \frac{\partial G_1}{\partial s_m}(\mathbf{r},\mathbf{s}) \\
 \vdots & \ddots & \vdots & \vdots & \ddots & \vdots \\
 \frac{\partial G_{\ell}}{\partial r_1}(\mathbf{r},\mathbf{s}) & \cdots & \frac{\partial G_{\ell}}{\partial r_{n-m}}(\mathbf{r},\mathbf{s}) &
 \frac{\partial G_{\ell}}{\partial s_1}(\mathbf{r},\mathbf{s}) & \cdots & \frac{\partial G_{\ell}}{\partial s_m}(\mathbf{r},\mathbf{s})
\end{array}\right]
= \left[\begin{array}{c|c} \frac{\partial G}{\partial\textbf{r}}(\textbf{r},\textbf{s}) & \frac{\partial G}{\partial\textbf{s}}(\textbf{r},\textbf{s}) \end{array}\right]
\end{equation}

Now we state our main result.

\begin{thm}
\label{main_thm}
Let
\begin{equation}\label{eq:thm_s}
\left\{
\begin{array}{ccc}
\dot{\textbf{x}}(t)  & = & f(\textbf{x}(t)) \\
\textbf{y} & = & g(\textbf{s})
\end{array}\right.
\end{equation}
be an observable system, where
$g:\mathbb{R}^m\to\mathbb{R}^m$.
If $G:\mathbb{R}^{n}\to\mathbb{R}^\ell$
is a collection of conserved quantities involving sufficient nodes $\textbf{s}$ and
other variables $\textbf{r}$ where
$\frac{\partial G}{\partial\textbf{s}}(\textbf{r},\textbf{s})$
is invertible and $\frac{\partial G}{\partial\textbf{r}}(\textbf{r},\textbf{s})$ full rank,
then $\exists\hspace{1pt}\hat{g}:\mathbb{R}^{n-m}\to\mathbb{R}^m$ such that the system
\begin{equation}\label{eq:thm_r}
\left\{
\begin{array}{ccc}
\dot{\textbf{x}}(t)  & = & f(\textbf{x}(t)) \\
\textbf{y} & = & \hat{g}(\textbf{r})
\end{array}\right.
\end{equation}
is observable.
\end{thm}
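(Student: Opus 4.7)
The plan is to use the implicit function theorem, applied to the conserved quantities, to express the sufficient variables $\mathbf{s}$ as a function $\phi$ of $\mathbf{r}$ on a level set of $G$, and then to transport the observability of the original output $g(\mathbf{s})$ to the composite output $\hat{g}(\mathbf{r}) := g(\phi(\mathbf{r}))$.

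First I would fix an initial state $\mathbf{x}_0 = (\mathbf{r}_0,\mathbf{s}_0)$ and set $c_0 = G(\mathbf{x}_0)$. Since $\partial G/\partial \mathbf{s}(\mathbf{r}_0,\mathbf{s}_0)$ is invertible (so in particular $\ell = m$), the implicit function theorem applied to $G(\mathbf{r},\mathbf{s}) = c_0$ produces a smooth map $\phi : U_r \to U_s$ on open neighborhoods of $\mathbf{r}_0$ and $\mathbf{s}_0$ with $\phi(\mathbf{r}_0) = \mathbf{s}_0$ and $G(\mathbf{r},\phi(\mathbf{r})) \equiv c_0$. I would then set $\hat{g}(\mathbf{r}) := g(\phi(\mathbf{r}))$ on $U_r$, extending arbitrarily (e.g., by any smooth extension) to all of $\mathbb{R}^{n-m}$ to match the signature required by the statement.

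Next, I would exploit the fact that $G$ is conserved along the flow. Any solution $\mathbf{x}(t) = (\mathbf{r}(t),\mathbf{s}(t))$ starting in $G^{-1}(c_0)$ satisfies $G(\mathbf{r}(t),\mathbf{s}(t)) \equiv c_0$, so by the uniqueness clause of the implicit function theorem, $\mathbf{s}(t) = \phi(\mathbf{r}(t))$ for as long as the trajectory stays in $U_r \times U_s$. Consequently $\hat{g}(\mathbf{r}(t)) = g(\phi(\mathbf{r}(t))) = g(\mathbf{s}(t)) = \mathbf{y}(t)$, so the output trajectory of the new system~\eqref{eq:thm_r} coincides pointwise with that of the original observable system~\eqref{eq:thm_s}. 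The bijection between initial states and output trajectories granted by observability of \eqref{eq:thm_s} therefore transfers directly to~\eqref{eq:thm_r}. The hypothesis that $\partial G/\partial \mathbf{r}$ has full rank enters through the IFT formula $\partial \phi/\partial \mathbf{r} = -(\partial G/\partial \mathbf{s})^{-1}(\partial G/\partial \mathbf{r})$, where it ensures that $\phi$ depends nondegenerately on $\mathbf{r}$ and rules out collapsed choices of $G$ for which $\hat{g}$ would lose the directional sensitivity that $g$ provided.

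The main obstacle I anticipate is the gap between the locality of the implicit function theorem and the global phrasing of observability, which calls for a bijection on an open set $\Omega_0 \subset \mathbb{R}^n$. Distinct initial conditions generally lie on distinct level sets of $G$, whereas the function $\phi$ built above is valid on only one level set at a time, and no single $\hat{g}$ of $\mathbf{r}$ alone can simultaneously encode the correct substitution on all level sets. I expect to resolve this in either of two natural ways consistent with the paper's framing: restrict $\Omega_0$ to lie within the single level set $G^{-1}(c_0)$ and conclude local observability at $\mathbf{x}_0$; or, following the SIR example, treat the conserved values as known system inputs (part of $\Lambda$), so that the relevant level set is fixed a priori and $\phi$ is globally defined on it. Under either reading, the chain---IFT produces $\phi$, conservation forces $\mathbf{s}(t) = \phi(\mathbf{r}(t))$ along trajectories, and identical output trajectories force the bijection---closes the argument.
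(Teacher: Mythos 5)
Your proposal is correct in substance and begins exactly as the paper does: apply the implicit function theorem to $G=\text{const}$ using the invertibility of $\partial G/\partial\mathbf{s}$ to get $\mathbf{s}=\psi(\mathbf{r})$, and set $\hat g = g\circ\psi$. Where you diverge is in how observability is transferred. The paper forms the differential embedding $\hat\Phi = \Phi\circ\psi$ and argues via the chain rule that $\partial\hat\Phi/\partial\mathbf{r} = (\partial\Phi/\partial\mathbf{s})\,(\partial\psi/\partial\mathbf{r})$ with $\partial\psi/\partial\mathbf{r} = -[\partial G/\partial\mathbf{s}]^{-1}\partial G/\partial\mathbf{r}$, so that the full-rank hypothesis on $\partial G/\partial\mathbf{r}$ yields injectivity of $\hat\Phi$ and hence (local) observability by the rank criterion of Eq.~\eqref{eq:rank}. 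You instead argue at the level of trajectories: conservation forces $\mathbf{s}(t)=\psi(\mathbf{r}(t))$ along any solution on the level set, so $\hat g(\mathbf{r}(t))=g(\mathbf{s}(t))$ and the output signal of \eqref{eq:thm_r} is \emph{identical} to that of \eqref{eq:thm_s}, whence the bijection of the observability definition transfers verbatim. Your route is more elementary and arguably more honest: it makes explicit that $\psi$ (hence $\hat g$) is defined per level set of $G$, so the conclusion is local to $G^{-1}(c_0)$ or requires treating the conserved values as known inputs --- a point the paper's proof glosses over entirely --- and it reveals that the full-rank hypothesis on $\partial G/\partial\mathbf{r}$ is not actually needed for the trajectory-identity argument (your remark that it "ensures nondegeneracy" is the one hand-wavy spot, since you never use it). What the paper's Jacobian route buys in exchange is a concretely checkable rank condition that matches the explicit embedding computations in its SIR and Michaelis--Menten examples.
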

\begin{proof}
Since $G$ consists of conserved quantities,
$G = $constant.
Then, by the implicit function theorem, there is a function $\psi:\mathbb{R}^{n-m}\to\mathbb{R}^{m}$ such that $\textbf{s} = \psi(\textbf{r})$. Let $\hat{g} = g\circ \psi$. 
Since the system in Eq.~\eqref{eq:thm_s} is observable, the embedding $\Phi(\textbf{x}) = (g(\textbf{s}),g'(\textbf{s}),\cdots,g^{(k)}(\textbf{s}))$ is injective.
{Let's define the map $\hat{\Phi}(\textbf{r}) = (\hat{g}(\textbf{r}),\hat{g}'(\textbf{r}),\dots,\hat{g}^{(k)}(\textbf{r})) = (g\circ \psi(\textbf{r}),g\circ \psi'(\textbf{r}),\dots,g\circ \psi^{(k)}(\textbf{r}))$ as illustrated in the following diagram}, 
\begin{center}
\begin{tikzcd}
\mathbb{R}^{n-m} \arrow[r, "\psi" ] \arrow[rdd, dotted, "\hat{\Phi}"] & \mathbb{R}^{m} \arrow[dd, "\Phi"] \\
 &  \\
 & \mathbb{R}^{km}
\end{tikzcd}
\end{center}

\noindent {Thus, $\hat{\Phi} = \Phi\circ\psi$ and}
\[
\frac{\partial \hat{\Phi}(\textbf{r})}{\partial\textbf{r}} = \frac{\partial (\Phi\circ\psi)(\textbf{r})}{\partial\textbf{r}} = \frac{\partial \Phi(\textbf{s})}{\partial\textbf{s}} \frac{\partial \Psi(\textbf{r})}{\partial\textbf{r}},
\quad
\text{ but }
\quad
\frac{\partial \Psi(\textbf{r})}{\partial\textbf{r}} = -\left[\frac{\partial G}{\partial\textbf{s}}(\textbf{r},\textbf{s})\right]^{-1} \frac{\partial G}{\partial\textbf{r}}(\textbf{r},\textbf{s}).
\]
{Then,}

\[
\frac{\partial (\hat{\Phi})(\textbf{r})}{\partial\textbf{r}} = 
-\left[\frac{\partial G}{\partial\textbf{s}}(\textbf{r},\textbf{s})\right]^{-1} \frac{\partial G}{\partial\textbf{r}}(\textbf{r},\textbf{s})
\ \frac{\partial \Phi(\textbf{s})}{\partial\textbf{s}}.
\]

{Thus, $\hat{\Phi}$ is one-to-one which makes the system in Eq.~\eqref{eq:thm_r} is observable.}

\end{proof}

\section{Applications}\label{sec:applications}
Here we demonstrate that relatively simple systems of interest in biology containing conserved quantities are observable through the lense of Theorem~\ref{main_thm}. {A very simple example can be seen in Appendix~\ref{toy-example}.}

\subsection{Constant Population SIR Model}
In the following we first ascertain that Eq.~\eqref{eq:sir} is observable provided the observed state variable is $R(t)$. Then we construct the differential embedding map for the system and show that implementing the conserved quantity allows observing other state variables to render the full system observable.\\

\noindent \textit{The system given in Eq.~\eqref{eq:sir} is observable.} The observed variable will be $R(t)$. To determine whether or not Eq.~\eqref{eq:sir} is observable with $R(t)$ as the scalar observable, we must look at the Jacobian matrix associated with the Lie derivatives of this system~\cite{liu2013observability}.  Writing Eq.~\eqref{eq:sir} compactly as
$$
\frac{d\mathbf{X}}{dt} = f(\mathbf{X})
$$
with $\mathbf{X} \equiv (S, I, R)^T$ and $f(\mathbf{X}) = (-\beta SI, \beta SI - \lambda I, \lambda I)^T$, the Lie derivative of a scalar observable $y(t)$ is given by
$$
\mathcal{L}(y) = \frac{\partial y}{\partial t} + \sum_{i = 1}^3 f_i \frac{\partial y}{\partial \mathbf{X}_i}
$$
In accordance with the usual computations necessary for ascertaining observability, we compute
$$
\mathcal{L}^0 (R) = R \quad \quad \mathcal{L}^1 (R) = 2\lambda I \quad \quad \mathcal{L}^2(R) = 4\lambda (\beta S - \lambda)I
$$
and construct the associated Jacobian matrix given by
$$
\mathcal{J} = \left(\begin{array}{c}
\nabla \mathcal{L}^0(R)\\
\nabla \mathcal{L}^1(R)\\
\nabla \mathcal{L}^2(R)
\end{array}\right).
$$
That is, each row of the Jacobian matrix consists of a gradient vector of the Lie derivatives with respect to the state variables of the system. When $R(t)$ is observed, the Jacobian matrix is
\begin{equation}
\mathcal{J} = \left( \begin{array}{ccc}
0 & 0 & 1\\
0 & 2\lambda & 0\\
4\lambda \beta I & 4\lambda(\beta S - \lambda) & 0
\end{array} \right),
\label{eq:jacobian}
\end{equation}
which has full rank provided $\lambda \neq 0$, $\beta \neq 0$, and $I \neq 0$. Having full rank implies the system is observable.\\

\noindent \textit{The corresponding differential embedding is bijective.} Consider the embedding $\Phi(S,I,R) = (R, \dot{R}, \ddot{R})^T = (R, \lambda I, \lambda(\beta SI - \lambda I))^T$. This is bijective.

\begin{proof}
To prove injectivity, let $\Phi(S_1, I_1, R_1) = \Phi(S_2, I_2, R_2)$. Then
$$
\left(\begin{array}{c}
R_1\\
\lambda I_1\\
\lambda (\beta S_1 I_1 - \lambda I_1)
\end{array}\right) = \left(\begin{array}{c}
R_2\\
\lambda I_2\\
\lambda (\beta S_2 I_2 - \lambda I_2)
\end{array}\right)
$$
This clearly implies $R_1 = R_2$ and $I_1 = I_2$. Finally, we have $\lambda(\beta S_1I_1 - \lambda I_1) = \lambda(\beta S_2I_2 - \lambda I_2)$.  Since $I_1 = I_2$, this implies $S_1 = S_2$ and injectivity is proved. For surjectivity, take $\Phi(S,I,R) = (a,b,c)^T$ for some $(a,b,c)^T$ in the codomain of $\Phi$. Then clearly we can take $R = a$, $I = \frac{b}{\lambda}$ and $S = \frac{c}{\lambda\beta b} + \frac{\lambda}{\beta}$ as a preimage and surjectivity is proved.
\end{proof}
One subtle point is that we must constrain the codomain of $\Phi$ to be $\mathbb{R}^3\setminus\{(a,0,c):a,c \in \mathbb{R}\}$ for it to be surjective. This is completely consistent with the Jacobian in Eq.~\eqref{eq:jacobian}, which says that $I \neq 0$ is necessary for observability.  This is also consistent physically, since a situation where $I = 0$ is not particularly interesting when studying the spread of disease.

With the bijectivity of the differential embedding established, it is sufficient to consider the Jacobian of various embeddings to determine whether or not the observed variable renders the full system observable. From this perspective, we next show that observing $I$ in the absence of the conserved quantity does not render the system observable.

Consider now the differential embedding $\Psi(S,I,R) = (I, \dot{I}, \ddot{I})^T = (I, \beta SI - \lambda I, (\beta S-\lambda)^2I - \beta^2 S I^2)^T$. Clearly, $\Psi$ is not injective because the image of a point $(S, I, R)$ is agnostic to the value $R$ takes. \\

\noindent \textit{The conserved quantity renders $I$ a sufficient observable.} Consider the same differential embedding $\Psi = (I, \dot{I}, \ddot{I})^T$, but now let $I = N - S - R$, where we solve for $I$ in the conserved population equation $S + I + R = N \quad \forall t$. The corresponding differential equation system becomes
\begin{align*}
\frac{dS}{dt} &= -\beta S (N - S - R)\nonumber \\
\frac{dI}{dt} &= -\dot{S}-\dot{R} \\
\frac{dR}{dt} &= \lambda (N - S - R) \nonumber
\end{align*}
The corresponding differential embedding is
$$
\Psi = \left( \begin{array}{c}
I\\
(\beta S -\lambda)(N-S-R)\\
(\beta S - \lambda)^2(N-S-R) - \beta^2 S(N-S-R)^2
\end{array}\right)
$$
Then, the resulting Jacobian is
\begin{equation}
\left(\frac{\partial \Psi}{\partial \mathbf{X}}\right) = \left(\begin{array}{ccc}
0 & 1 & 0\\
\beta(N- 2S - R) & 0 & \lambda - \beta S  \\
F(S,R) & 0 & -(\beta S -\lambda)^2 + 2\beta^2S(N-S-R)
\end{array}\right )
\label{eq:jacobian_embedding}
\end{equation}
where $F(S,R) = (\beta S -\lambda)(2\beta(N-S-R)-\beta S + \lambda) + \beta^2(N-S-R)(3S+R-N)$. Again, provided $\beta \neq 0$ and $\lambda \neq 0$, $\left(\frac{\partial \Psi}{\partial \mathbf{X}}\right)$ has full rank and renders the system observable with the observed variable being $I(t)$.\\

\noindent \textit{Relating the two embeddings.} Since the system in Eq.~\eqref{eq:sir} is observable, the embedding $\Phi(S,I,R) = (R,\dot{R},\ddot{R})^T$ is bijective. Let $\hat{\Phi}(R,\dot{R},\ddot{R}) = \Psi = (I,\dot{I},\ddot{I})^T$ where $I = \psi(S,R) = N-R-S$.
Then, $\hat{\Phi}$ is a bijection such that the following diagram commutes.

\begin{center}
\begin{tikzcd}
\mathbb{R}^3 \arrow[r, "\Phi" ] \arrow[rdd, dotted] & \mathbb{R}^{3} \arrow[dd, "\hat{\Phi}"] \\
 &  \\
 & \mathbb{R}^{3}
\end{tikzcd}
\end{center}

\subsubsection{Relating to the Graphical Approach}
In summary, the preceding discussion says that Eq.~\eqref{eq:sir} is observable if the observed state is $R(t)$. This is consistent with what is obtained in the corresponding directed graph.

In the directed graph of the original SIR system, the only source node is $R$ (see Figure~\ref{fig:sir}A). The graphical approach for determining observability states that observing the source nodes of the directed graph of a system is necessary and sufficient to render the system observable. Consistent with the analysis in the previous section, observing $R$ rendered Eq.~\eqref{eq:sir} observable. Furthermore, in the original system, observing $I$ will \textit{not} render the system observable as $I$ is not a source node. However, it can be made into a source node by invoking the conserved quantity and transforming the system by setting $I = N - S - R$ (see Figure~\ref{fig:sir}B). In the transformed system, $I$ is the only source node, thereby making the system observable by observing $I$.  We note that if we make the transformation $S = N - R - I$, then $S$ will become the source node and it will be sufficient to observe $S$ to render the system observable. \\

A main takeaway is that the existence of the conserved quantity allows for more flexibility in tracking an epidemic from the perspective of the SIR model. Sans the conserved quantity, one can \textit{strictly} observe only $R$, the number of recovered individuals, to understand the full system. Simply observing only $S$ or only $I$ will not do the job.  However, the existence of the conserved quantity says that observing \textit{any one} of the state variables is sufficient to completely understand the system. Thus, trackers of epidemics have flexibility in measuring the epidemic by observing any one of the subpopulations---whichever one is easiest.

\subsection{Michaelis-Menten Kinetics}
The simplest enzyme kinetics are Michaelis-Menten kinetics, applied to enzyme-catalyzed reactions of one substrate and one product \cite{cook2007enzyme}. An enzyme E binds with its substrate S to form a complex ES which then dissociates into E and P, the product of the enzymatic reaction. The reaction network is as follows:
\begin{equation}
\ce{E + S <=>[{$k_1$}][$k_{-1}$] ES ->[$k_2$] E + P}
\label{eq:mm}
\end{equation}
where $k_1, k_{-1}, k_2$ are rate constants quantitating the corresponding reactions. Using the law of mass action, we can derive a model characterizing reaction~\eqref{eq:mm}.  Let $e \equiv [E], s \equiv [S], c \equiv [ES], \text{ and } p \equiv [P]$. Then we have \cite{keener2009mathematical}
\begin{equation}
\begin{aligned}
\frac{de}{dt} &= (k_{-1} + k_2)c - k_1es \\
\frac{ds}{dt} &= k_{-1}c - k_1es \\
\frac{dc}{dt} &= k_1es - (k_{-1} + k_2)c \label{eq:michael}\\
\frac{dp}{dt} &= k_2c
\end{aligned}
\end{equation}
There are two conserved quantities in this system:
\begin{equation}
\begin{aligned}
&e + c = E_0\\
&s + c + p = S_0
\label{eq:conserved_mm}
\end{aligned}
\end{equation}
where $E_0 \in \mathbb{R}$ represents the initial amount of enzyme in the system and $S_0 \in \mathbb{R}$ is the initial amount of substrate. Because these quantities are user-controlled, $E_0$ and $S_0$ can be construed as implicit inputs to the enzymatic system, Eq.~\eqref{eq:michael}. The two conserved quantities allow for dimensional reduction of system~\eqref{eq:michael} to a planar system
\begin{equation}
\begin{aligned}
\frac{ds}{dt} &= k_{-1}c - k_1(E_0 - c)s \\
\frac{dc}{dt} &= k_1(E_0 - c)s - (k_{-1} + k_2)c \label{eq:michael2}\\
\end{aligned}
\end{equation}
By rescaling $s, c,$ and $t$ and assuming that the concentration of substrate vastly outweighs the concentration of enzyme, we can derive the nondimensionalized system
\begin{equation}
\begin{aligned}
\frac{d\sigma}{d\tau} &= -\sigma + (1 - \eta + \sigma)\rho\\
\varepsilon\frac{d\rho}{d\tau} &= \sigma - (1+ \sigma)\rho\label{eq:michael3}\\
\end{aligned}
\end{equation}
where $\sigma \equiv k_1 s/(k_{-1} + k _2)$, $\rho \equiv c/E_0$, $\tau \equiv k_1 E_0 t$. We define the dimensionless parameters $\varepsilon \equiv E_0k_1/(k_{-1} +  k_2)$ and $\eta \equiv k_2/(k_{-1} + k_2)$ with $0 < \varepsilon \ll 1$. We can thereafter invoke the stationary state approximation \cite{meiss2007differential} and project onto the slow manifold \cite{rinzel1987formal} by assuming $\rho = \sigma(1 + \sigma)^{-1}$. Substituting this expression into the differential equation for $p$ then yields the classical Michaelis-Menten equation:
\begin{equation}
\frac{dp}{dt} = \frac{V_{\rm{max}}s}{K + s}
\label{eq:michael4}
\end{equation}
where $V_{\rm{max}} \equiv k_2 E_0$ is the fastest rate possible at which product P can be synthesized and $K \equiv (k_{-1} + k_2)/k_1$ is the dissociation constant.

The derivation and generalization of Eq.~\eqref{eq:michael4} to more complicated enzyme-substrate mechanisms are a central focus in the theoretical biochemical literature \cite{cook2007enzyme,sims2009aufbau}. While such derivations are important for the description of biochemical processes, they do not inform experimentalists of the ramifications of the theoretical models to the experiments themselves.

The conserved quantities in the Michaelis-Menten system confine the 4D dynamics to a two-dimensional submanifold, thereby allotting the desirable property of analytic tractabillity in the system. But what does the conserved quantity imply for experimentalists? Broadly, the existence of a conserved quantity consisting of variables that \textit{correspond to sources} in the directed graph representation \footnote{In the enzyme kinetics section of this paper, we will describe observability strictly through the graphical approach.} increases the number of variables that render the full system observable.

The reaction diagram for system~\eqref{eq:michael} is shown in Figure~\ref{fig:mm}A. The product P is the only source, implying that to understand the full system (i.e., to render the system observable), one must observe P. In an experimental setting, the kinetics of a given enzyme are measured and calculated from the observed dynamics of P. In a real setting, if P is easily measurable, then the situation at hand is no problem. However, in many situations, the product P is not directly measurable \cite{cook2007enzyme}. One must find an alternative to derive the kinetics of the corresponding enzymatic reaction. We demonstrate here that the presence of conserved quantities involving source terms allow for more freedom in observing the system.  We now systematically examine how the conserved quantities given in Eqs.~\eqref{eq:conserved_mm} alter the reaction diagram.

\begin{figure}
\includegraphics[width = \textwidth]{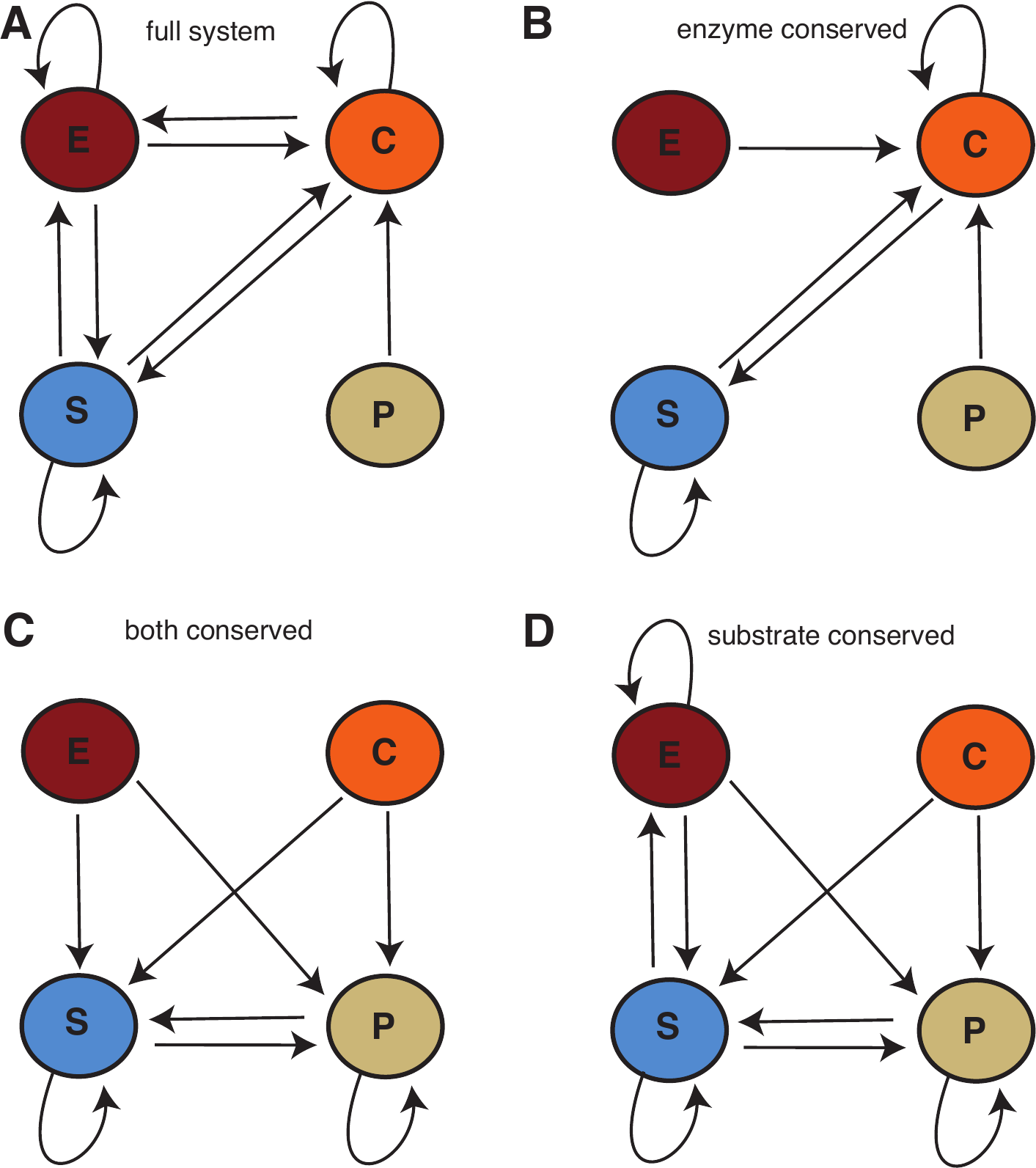}
\caption{Reaction graphs corresponding to the Michaelis-Menten system. (A) The full model. (B) The model with enzyme conservation imposed. (C) The model with enzyme conservation and substrate conservation imposed. (D) The model with substrate conservation imposed.}
\label{fig:mm}
\end{figure}

\subsubsection{Enzyme Conservation}
Let us suppose that we only impose enzyme conservation in the system. How does this alter the reaction diagram? In this case, we set $e = E_0 - c$, and the Michaelis-Menten system becomes
\begin{equation}
\begin{aligned}
\frac{de}{dt} &= -\frac{dc}{dt} \\
\frac{ds}{dt} &= k_{-1}c - k_1(E_0 - c)s \\
\frac{dc}{dt} &= k_1(E_0 - c)s - (k_{-1} + k_2)c \label{eq:michael_red_1}\\
\frac{dp}{dt} &= k_2c.
\end{aligned}
\end{equation}
Following our formalism for obtaining the corresponding reaction diagram, we obtain the diagram shown in Figure~\ref{fig:mm}B. It now has \textit{two} sources: E and P.  This means that to render the system observable, one must observe the dynamics of both E and P. Although the conserved quantity greatly simplifies mathematical analysis, the existence of this conserved quantity thus complicates the experimental setting. The issue arises because the imparted conserved quantity does not consist of the source from the full system, P.

We note that the reaction diagram would have a similar issue even if we took $c = E_0 - e$ in the conserved quantity.

\subsubsection{Substrate Conservation}
Now let us examine what happens when we impart substrate conservation. In this case, we set $c = S_0 - s - p$, rendering system~\eqref{eq:michael} as

\begin{equation}
\begin{aligned}
\frac{de}{dt} &= (k_{-1} + k_2)(S_0 - s - p) - k_1es \\
\frac{ds}{dt} &= k_{-1}(S_0 - s - p) - k_1es \\
\frac{dc}{dt} &= -\frac{ds}{dt}-\frac{dp}{dt} \label{eq:michael_red_2}\\
\frac{dp}{dt} &= k_2(S_0 - s - p)
\end{aligned}
\end{equation}
The corresponding reaction diagram is given in Figure~\ref{fig:mm}D. There is again only one source: C. All other nodes have incoming edges including self loops. The implication here is that now we need only observe C to understand the system. Furthermore, if we had set $s = S_0 - c - p$ instead, the only source in the resulting reaction diagram would be S, meaning we need to only observe S to render the system observable. The experimental implication is that one can observe the dynamics of any of S, C, or P to completely understand the system. Hence, if any of S, C, or P are measurable in a laboratory setting, the system can be understood. Thus, the conserved quantity consisting of the source node vastly expanded the number of state variables the we can measure to render the system completely observable.

\subsubsection{Enzyme and Substrate Conservation}
What happens if we impose conservation of both enzyme and substrate? Does this simplify the system further? In this case, we set $c = S_0 - s - p$ and $e = E_0 -c = E_0 - S_0 + s + p$. The system becomes
\begin{equation}
\begin{aligned}
\frac{de}{dt} &= \frac{ds}{dt} + \frac{dp}{dt} \\
\frac{ds}{dt} &= k_{-1}(S_0 - s - p) - k_1(E_0 - S_0 + s + p)s \\
\frac{dc}{dt} &= -\frac{ds}{dt} - \frac{dp}{dt} \label{eq:michael_red_3}\\
\frac{dp}{dt} &= k_2(S_0 - s - p)
\end{aligned}
\end{equation}
The corresponding reaction diagram is shown in Figure~\ref{fig:mm}C. Again, the diagram depicts two source nodes (E and C), implying one must observe both C and E to understand the system. This is, of course, incorrect.

The above analysis brings to light an important point: one must not conclude that theoretical conserved quantities imply positive experimental ramifications. Indeed, if one only analyzed the model with both substrate and enzyme conservation, they would conclude that one must observe two state variables to understand the enzymatic system. Conserved quantities that do not include source node state variables do not inform the observability of the system. The conserved quantity $s + c + p = S_0$, on the other hand, yields a correct interpretation of observability. Namely, any one of the terms involved in the conserved quantity can be observed to understand the system.

\section{Conclusions}\label{sec:conclusions}
We summarize the main contributions of this manuscript as follows. Most generally, we have proved a theorem conveying that observable dynamical systems with conserved quantities that involve source nodes in the corresponding directed graph representation of the system can be recast so that many more system outputs than originally thought could be observed to render the system observable. We used differential embeddings to prove this. In effect, we generalized the observability criteria provided by the graphical approach and the rank-based approach of differential embeddings.

Our approach has important implications for physical and biological sciences. Namely, we argue that systems with conserved quantities exhibit more flexibility in what must be observed for the full system to be understood. We demonstrate this with two concrete biological examples with conserved quantities:  the constant population SIR model and the classical Michaelis-Menten system for enzymatic reactions. For the former model, the original system necessitates observation of $R(t)$ to render the system observable.  However, the conserved quantity allows any one of $S, I,$ or $R$ to be observed for the system to be observable.  Similarly, the classical Michalis-Menten system requires observation of the product, $P(t)$, to render the system observable. The appropriate conserved quantity allows for product, substrate, or enzyme-substrate complex to be observed for the full system to be understood. Such flexibility can be the difference between success and failure in experimental settings.

For dynamical systems exhibiting multiple conserved quantities, our method identifies the `correct' submanifold of phase space to which dynamics should be {projected} to obtain alternative observables that render the full system observable. Only conserved quantities that incorporate source nodes of the associated directed graph of the dynamical system can yield other outputs of the system that render the dynamical system observable. {Employing conserved quantities that do not involve source nodes yield no benefit from the observability perspective. Moreover, conserved quantities that are hidden in systems but provide no physical interpretation can also be unavailing (see Appendix~\ref{not-useful} for an example).}

Mathematically, we contribute to the rich mosaic of literature available on controllable and observable systems. Our method will be of interest because it expands upon and improves the popular methods given by the graphical approach and the rank-based differential embeddings approach.

\section{Acknowledgments}

The authors thank anonymous reviewers for their insightful comments that have improved the manuscript. D.M. was partially supported by a Collaboration grant (\# 850896) from the Simons Foundation and a grant (NSF: \# 2424633) from the National Science Foundation. B.K. would like to thank his son, Surya, for his unconditional love and boundless energy. 

\bibliographystyle{ieeetr}
\bibliography{references}

\begin{thebibliography}{10}

\bibitem{aeyels1981generic}
D.~Aeyels, ``Generic observability of differentiable systems,'' {\em SIAM
  Journal on Control and Optimization}, vol.~19, no.~5, pp.~595--603, 1981.

\bibitem{sontag2013mathematical}
E.~D. Sontag, {\em Mathematical control theory: deterministic finite
  dimensional systems}, vol.~6.
\newblock Springer Science \& Business Media, 2013.

\bibitem{kou1973observability}
S.~R. Kou, D.~L. Elliott, and T.~J. Tarn, ``Observability of nonlinear
  systems,'' {\em Information and Control}, vol.~22, no.~1, pp.~89--99, 1973.

\bibitem{shinar2010structural}
G.~Shinar and M.~Feinberg, ``Structural sources of robustness in biochemical
  reaction networks,'' {\em Science}, vol.~327, no.~5971, pp.~1389--1391, 2010.

\bibitem{gunawardena2003chemical}
J.~Gunawardena, ``Chemical reaction network theory for in-silico biologists,''
  {\em Notes available for download at http://vcp. med. harvard.
  edu/papers/crnt. pdf}, vol.~5, 2003.

\bibitem{feinberg2019foundations}
M.~Feinberg, {\em Foundations of chemical reaction network theory}.
\newblock Springer, 2019.

\bibitem{turns2011introduction}
S.~R. Turns, {\em An Introduction to Combustion: Concepts and Applications}.
\newblock McGraw-Hill Companies New York, NY, USA, 3rd~ed., 2011.

\bibitem{klippenstein2017theoretical}
S.~J. Klippenstein, ``From theoretical reaction dynamics to chemical modeling
  of combustion,'' {\em Proceedings of the Combustion Institute}, vol.~36,
  no.~1, pp.~77--111, 2017.

\bibitem{anderson2022analysis}
T.~I. Anderson and A.~R. Kovscek, ``Analysis and comparison of in-situ
  combustion chemical reaction models,'' {\em Fuel}, vol.~311, p.~122599, 2022.

\bibitem{witthaut2022collective}
D.~Witthaut, F.~Hellmann, J.~Kurths, S.~Kettemann, H.~Meyer-Ortmanns, and
  M.~Timme, ``Collective nonlinear dynamics and self-organization in
  decentralized power grids,'' {\em Reviews of modern physics}, vol.~94, no.~1,
  p.~015005, 2022.

\bibitem{lopez2017assessment}
J.~M. L{\'o}pez-Lezama, J.~Cortina-G{\'o}mez, and N.~Mu{\~n}oz-Galeano,
  ``Assessment of the electric grid interdiction problem using a nonlinear
  modeling approach,'' {\em Electric Power Systems Research}, vol.~144,
  pp.~243--254, 2017.

\bibitem{osipov2018adaptive}
D.~Osipov and K.~Sun, ``Adaptive nonlinear model reduction for fast power
  system simulation,'' {\em IEEE Transactions on Power Systems}, vol.~33,
  no.~6, pp.~6746--6754, 2018.

\bibitem{karamched2023stochastic}
B.~R. Karamched and C.~E. Miles, ``Stochastic switching of delayed feedback
  suppresses oscillations in genetic regulatory systems,'' {\em Journal of the
  Royal Society Interface}, vol.~20, no.~203, p.~20230059, 2023.

\bibitem{fazli2022network}
M.~Fazli and R.~Bertram, ``Network properties of electrically coupled bursting
  pituitary cells,'' {\em Frontiers in Endocrinology}, vol.~13, p.~936160,
  2022.

\bibitem{hogan2021flipping}
J.~P. Hogan and B.~E. Peercy, ``Flipping the switch on the hub cell: Islet
  desynchronization through cell silencing,'' {\em PloS one}, vol.~16, no.~4,
  p.~e0248974, 2021.

\bibitem{kim2012mechanism}
J.~K. Kim and D.~B. Forger, ``A mechanism for robust circadian timekeeping via
  stoichiometric balance,'' {\em Molecular systems biology}, vol.~8, no.~1,
  p.~630, 2012.

\bibitem{del2015biomolecular}
D.~Del~Vecchio and R.~M. Murray, {\em Biomolecular feedback systems}.
\newblock Princeton University Press Princeton, NJ, 2015.

\bibitem{weiss2013sir}
H.~H. Weiss, ``The sir model and the foundations of public health,'' {\em
  Materials matematics}, pp.~0001--17, 2013.

\bibitem{kuznetsov1994bifurcation}
Y.~A. Kuznetsov and C.~Piccardi, ``Bifurcation analysis of periodic seir and
  sir epidemic models,'' {\em Journal of mathematical biology}, vol.~32,
  pp.~109--121, 1994.

\bibitem{ngonghala2020could}
C.~N. Ngonghala, E.~A. Iboi, and A.~B. Gumel, ``Could masks curtail the
  post-lockdown resurgence of covid-19 in the us?,'' {\em Mathematical
  biosciences}, vol.~329, p.~108452, 2020.

\bibitem{hoffmann2020differential}
H.~Hoffmann, C.~Thiede, I.~Glauche, M.~Bornhaeuser, and I.~Roeder,
  ``Differential response to cytotoxic therapy explains treatment dynamics of
  acute myeloid leukaemia patients: insights from a mathematical modelling
  approach,'' {\em Journal of the Royal Society Interface}, vol.~17, no.~170,
  p.~20200091, 2020.

\bibitem{plaugher2024cancer}
D.~Plaugher and D.~Murrugarra, ``Cancer mutationscape: revealing the link
  between modular restructuring and intervention efficacy among mutations,''
  {\em NPJ Systems Biology and Applications}, vol.~10, no.~1, p.~74, 2024.

\bibitem{komarova2023mutant}
D.~Wodarz and N.~L. Komarova, ``Mutant fixation in the presence of a natural
  enemy,'' {\em Nature Communications}, vol.~14, no.~1, p.~6642, 2023.

\bibitem{yano2020theory}
K.~Yano, {\em The theory of Lie derivatives and its applications}.
\newblock Courier Dover Publications, 2020.

\bibitem{letellier2005relation}
C.~Letellier, L.~A. Aguirre, and J.~Maquet, ``Relation between observability
  and differential embeddings for nonlinear dynamics,'' {\em Physical Review
  E—Statistical, Nonlinear, and Soft Matter Physics}, vol.~71, no.~6,
  p.~066213, 2005.

\bibitem{liu2013observability}
Y.-Y. Liu, J.-J. Slotine, and A.-L. Barab{\'a}si, ``Observability of complex
  systems,'' {\em Proceedings of the National Academy of Sciences}, vol.~110,
  no.~7, pp.~2460--2465, 2013.

\bibitem{haber2017state}
A.~Haber, F.~Molnar, and A.~E. Motter, ``State observation and sensor selection
  for nonlinear networks,'' {\em IEEE Transactions on Control of Network
  Systems}, vol.~5, no.~2, pp.~694--708, 2017.

\bibitem{ji2014threshold}
C.~Ji and D.~Jiang, ``Threshold behaviour of a stochastic sir model,'' {\em
  Applied Mathematical Modelling}, vol.~38, no.~21-22, pp.~5067--5079, 2014.

\bibitem{tornatore2005stability}
E.~Tornatore, S.~M. Buccellato, and P.~Vetro, ``Stability of a stochastic sir
  system,'' {\em Physica A: Statistical Mechanics and its Applications},
  vol.~354, pp.~111--126, 2005.

\bibitem{milner2008sir}
F.~A. Milner and R.~Zhao, ``Sir model with directed spatial diffusion,'' {\em
  Mathematical Population Studies}, vol.~15, no.~3, pp.~160--181, 2008.

\bibitem{marques2022sir}
J.~Marques, A.~D. CEZARO, and M.~Lazo, ``A sir model with spatially distributed
  multiple populations interactions for disease dissemination,'' {\em Trends in
  Computational and Applied Mathematics}, vol.~23, no.~1, pp.~143--154, 2022.

\bibitem{zhou2015influence}
X.~Zhou, Y.~Hu, Y.~Wu, and X.~Xiong, ``Influence analysis of information
  erupted on social networks based on sir model,'' {\em International Journal
  of Modern Physics C}, vol.~26, no.~02, p.~1550018, 2015.

\bibitem{morsky2023impact}
B.~Morsky, F.~Magpantay, T.~Day, and E.~Ak{\c{c}}ay, ``The impact of threshold
  decision mechanisms of collective behavior on disease spread,'' {\em
  Proceedings of the National Academy of Sciences}, vol.~120, no.~19,
  p.~e2221479120, 2023.

\bibitem{hethcote2000mathematics}
H.~W. Hethcote, ``The mathematics of infectious diseases,'' {\em SIAM review},
  vol.~42, no.~4, pp.~599--653, 2000.

\bibitem{szederkenyi2018analysis}
G.~Szederkenyi, A.~Magyar, and K.~M. Hangos, {\em Analysis and control of
  polynomial dynamic models with biological applications}.
\newblock Academic Press, 2018.

\bibitem{cameron2001process}
I.~T. Cameron and K.~Hangos, {\em Process modelling and model analysis}.
\newblock Elsevier, 2001.

\bibitem{cook2007enzyme}
P.~F. Cook and W.~W. Cleland, {\em Enzyme kinetics and mechanism}.
\newblock Garland Science, 2007.

\bibitem{keener2009mathematical}
J.~Keener and J.~Sneyd, {\em Mathematical physiology: I: Cellular physiology}.
\newblock Springer, 2009.

\bibitem{meiss2007differential}
J.~D. Meiss, {\em Differential dynamical systems}.
\newblock SIAM, 2007.

\bibitem{rinzel1987formal}
J.~Rinzel, ``A formal classification of bursting mechanisms in excitable
  systems,'' in {\em Mathematical Topics in Population Biology, Morphogenesis
  and Neurosciences: Proceedings of an International Symposium held in Kyoto,
  November 10--15, 1985}, pp.~267--281, Springer, 1987.

\bibitem{sims2009aufbau}
P.~A. Sims, ``An" aufbau" approach to understanding how the king--altman method
  of deriving rate equations for enzyme-catalyzed reactions works,'' {\em
  Journal of chemical education}, vol.~86, no.~3, p.~385, 2009.

\end{thebibliography}

\clearpage
\appendix

\section{A very small example}
\label{toy-example}
As a toy example, for any nonzero number $a$, we examine the two-variable linear system
\begin{equation}\label{eq:toy}
\begin{aligned}
\frac{dR}{dt} &= ~a R \\
\frac{dS}{dt} &= -a R
\end{aligned}
\end{equation}

We analyze the toy example explicitly, using the algebraic method, using the graphical method, and then the updated graphical method with conserved quantities.

\subsubsection{Explicit}

Equation \ref{eq:toy} can be explicitly solved as $R = R_0 e^{at}$ and $\dot S = -a R_0 e^{at}$ so $S = S_0+R_0-R_0 e^{at}$.

We might observe $y = g(R,S) = R$ in order to find $y(0) = R_0$ and $\dot y(0) = a R_0$, but this does not reveal $S_0$ and therefore $S(t)$ could not be recovered.

We might observe $y = g(R,S) = S$ in order to find $y(0) = S_0$ and $\dot y(0) = -a R_0$. We can recover $R_0 = -\tfrac1a \dot y(0)$ and $S_0 = y(0)$. Hence we can recover $R(t)$ and $S(t)$.

In other words, $R$ is not a sufficient observable, but $S$ is a sufficient observable. There are no degenerate parameters (assuming $a\neq 0$) or initial conditions.

\[ \begin{array}{cccc}
\text{Observed} & R_0 & S_0 & \text{Degenerate} \\ \hline \\[-1ex]
R & y_0 & \text{unobservable} & \text{all} \\[2ex]
S & -\dfrac{1}{a} \dot y_0 & y_0 & \text{none} \\
\end{array} \]

\subsubsection{Algebraic}

As equation \ref{eq:toy} is linear, we can easily use the algebraic characterization of observability.

Let $\vec{x}(t) = \begin{bmatrix} R(t) \\ S(t) \end{bmatrix}$ and $A=\begin{bmatrix} a & 0 \\ -a & 0 \end{bmatrix}$ so that $\dot x = Ax$.

If we observe $y=g(R,S) = R$, then $C=\begin{bmatrix} 1 & 0 \end{bmatrix}$ and the observability matrix is $O = \begin{bmatrix} C \\ C A \end{bmatrix} = \begin{bmatrix} 1 & 0 \\ a & 0 \end{bmatrix}$ which does not have full rank, so we cannot recover $S$.

If we observe $y=g(R,S) = S$, then $C=\begin{bmatrix} 0 & 1 \end{bmatrix}$ and the observability matrix is $O = \begin{bmatrix} 0 & 1 \\ -a & 0  \end{bmatrix}$ which is full rank and $S$ is a sufficient observable.

Thus again $R$ is not a sufficient observable, but $S$ is a sufficient observable.

\subsubsection{Graphical}

Using the graphical approach, we get \rotatebox[origin=c]{270}{$\circlearrowright$} $R \leftarrow S$. Hence we expect that $S$ is a sufficient observable, and $R$ is not. This agrees with our previous analysis.

\subsubsection{Conserved}

We note that $h(R,S) = R+S$ is a conserved quantity. If we observe both it and $R$, then we can rewrite the system as:
\begin{equation}
\begin{aligned}
\frac{dR}{dt} &= -\frac{dS}{dt} \\
\frac{dS}{dt} &= -a (Q_0-S)
\label{eq:red_toy}
\end{aligned}
\end{equation}
with our modified graph $R \to S$ \rotatebox[origin=c]{90}{$\circlearrowright$}. Hence we expect that $R$ is a sufficient observable if the conserved quantity $Q_0$ is also available. We verify this explicitly:

If we observe $g(R,S)=R$, then we would know $R(0)=R_0$ and $\dot R(0) = a(Q_0 - S(0))$. We can solve for $S(0) = Q_0 - \tfrac1a \dot R(0)$. Hence we can recover $R_0$ and $S_0$ and thus all $R(t)$ and $S(t)$. Alternatively, $R(0) + S(0) = Q_0$ immediately yields $S(0) = Q_0 - R_0$.

Note that our formula for $S_0$ involves the conserved quantity $Q_0$, but in a very simple way.

In order to rule out other graphical results, we classify all conserved quantities. If $h(R,S)$ is any conserved quantity, then it must be constant on the trajectories $(R_0 e^{at},S_0+R_0-R_0 e^{at})$. Letting $at\to-\infty$, we get that $h(R,S) = h(0,S+R) = k(R+S)$ is a function $R+S$, so our conserved quantity $Q_0=R_0+S_0 = R(t)+S(t)$ is the most general possible.

The only graphical representations possible using the conserved method are:
\begin{itemize}
\item \rotatebox[origin=c]{270}{$\circlearrowright$} $R \leftarrow S$ with no conserved quantities observed, and
\item $R \to S$ \rotatebox[origin=c]{90}{$\circlearrowright$} with the conserved quantity $R_0+S_0$ observed.
\end{itemize}

\clearpage

\section{An example where conserved quantities are not useful}

\label{not-useful}

The Lotka-Volterra predator-prey example has parameters $R>0$, $D>0$, $B>0$, $M>0$ describing the prey Reproductive rate, the Death probability when prey and predator meet, the Benefit of a meeting of prey and predator to the predator, and the Mortality rate of the predator in the absence of prey. We let $r$ be the prey (rabbits), and $m$ be the predators (monsters).
\begin{equation}\label{eq:useless}
\begin{aligned}
\frac{dr}{dt} &= Rr-Drm \\
\frac{dm}{dt} &= Brm-Mm
\end{aligned}
\end{equation}

We again analyze repeatedly.

\subsubsection{Explicit} There is no explicit solution for $r(t)$ and $s(t)$. However, we can still provide explicit recovery formulas for $r_0$ and $m_0$, including the degenerate solution spaces.

If we observe $y=g(r,m) = r$, then we know $y_0 = r_0$ and $\dot y_0 = Rr_0-Dr_0 m_0$. If $r_0 \neq 0$, then we may recover $r_0 = y_0$ and $m_0 = \dfrac{R}{D} - \dfrac{1}{D} \dfrac{\dot y_0}{y_0}$. If $r_0 = 0$, then $r(t) \equiv 0$, and $m(t) = m_0 e^{-Mt}$ with $m_0$ arbitrary. Hence when $r_0 = 0$, $r_0$ is not a sufficient obserable, but otherwise it is.

Similarly, if we observe $y = g(r,m) = m$, then we know $y_0 = m_0$ and $\dot y_0 = Br_0m_0 - Mm_0$. If $m_0 \neq 0$, then we may recover $r_0 = \dfrac{M}{B} + \dfrac{1}{B}\dfrac{\dot y_0}{y_0}$. If $m_0 = 0$, then $m(t) \equiv 0$ and $r(t) = r_0 e^{Rt}$ with $r_0$ arbitrary.

Hence each population is a sufficient observable, assuming it is not zero.

\[ \begin{array}{cccc}
\text{Observed} & r_0 & m_0 & \text{Degenerate} \\ \hline \\[-1ex]
r & y_0 & \dfrac{R}{D} - \dfrac{1}{D} \dfrac{\dot y_0}{y_0} & y_0 = 0 \\[2ex]
m & \dfrac{M}{B} + \dfrac{1}{B} \dfrac{\dot y_0}{y_0} & y_0 & y_0 = 0 \\
\end{array} \]

\subsubsection{Algebraic} While the system is not linear, we can linearize near a stationary point, and check there.

$\vec{x} = \begin{bmatrix} r - \dfrac{M}{B} \\[1ex] m - \dfrac{R}{D} \end{bmatrix}$ and $A = \begin{bmatrix} 0 & -\dfrac{DM}{B} \\[1ex] \dfrac{BR}{D} & 0 \end{bmatrix}$ and $\dot x \approx A \vec{x}$.

If we observe $y=g(r,m) = r$, then $C=\begin{bmatrix} 1 & 0 \end{bmatrix}$ and $O = \begin{bmatrix} 1 & 0 \\ 0 & -\dfrac{DM}{B} \end{bmatrix}$ is full rank, so $r$ is a sufficient observable as expected.

If we observe $y=g(r,m) = m$, then $C=\begin{bmatrix} 0 & 1 \end{bmatrix}$ and $O = \begin{bmatrix} 0 & 1 \\ \dfrac{BR}{D} & 0 \end{bmatrix}$ is full rank, so $m$ is a sufficient observable as expected.

Here $\dot x_1 - A_{1,:} \vec x = -D x_1 x_2$ and $\dot x_2 - A_{2,:} \vec x = B x_1 x_2$, so our results are valid in some open neighborhood of the stationary point. Indeed, our explicit results, show that our results are valid as long as $r_0 > 0$ and $m_0 > 0$.

\subsubsection{Graphical} We have a complete graph: \rotatebox[origin=c]{270}{$\circlearrowright$} $r \leftrightarrow m$ \rotatebox[origin=c]{90}{$\circlearrowright$}, so we expect each population to be a sufficient observable, agreeing with our previous analysis.

\subsubsection{Conserved} There is a conserved quantity, found by separation of variables on $\dfrac{dm}{dr} = \dfrac{Brm-Mm}{Rr-Drm}$ to get $\dfrac{Rr-Drm}{rm} dm = \dfrac{Brm-Mm}{rm} dr$ and so $\int (R\tfrac1m - D) dm = \int (N-M\tfrac1r) dr$ and so $R \ln(m) - Dm = Br-M\ln(r) + Q_0$. Hence $h(r,m) = R\ln(m) + M\ln(r) - Dm -Br$ is a conserved quantity.

Each trajectory (other than the stationary point $(r,m) = \left(\dfrac{M}{B},\dfrac{R}{D}\right)$) consists of the entire curve given by the implicit equation $h(r,m) = C$ for a unique $C > h\left(\dfrac{M}{B},\dfrac{R}{D}\right)$. In particular, every conserved quantity $h_2(r,m)$ must be equal to $k(C)=k(h(r,m))$ for some function $k$, and this conserved quantity is the most general conserved quantity.

However, using the conserved quantity does not affect our graphical methods. We would simply have to observe both $C$ and either $y=r$ or $y=m$. Since merely observing $y=r$ or $y=m$ was already sufficient, nothing is gained by additionally observing $C$.

This matches the intuitive feeling that the conserved quantity $h(r,m)$ is not of much use in solving this system. Its main use seems to be showing the trajectories are bounded.

\end{document}